\theoremstyle{plain}
\newtheorem{introtheorem}{Theorem}
\newtheorem{theorem}{Theorem}[section]
\newtheorem{proposition}[theorem]{Proposition}
\newtheorem{lemma}[theorem]{Lemma}
\newtheorem{corollary}[theorem]{Corollary}
\newtheorem*{proposition*}{Proposition}
\newtheorem{introcorollary}{Corollary}
\theoremstyle{definition}
\newtheorem{definition}[theorem]{Definition}
\newtheorem{example}[theorem]{Example}
\theoremstyle{remark}
\newtheorem{remark}[theorem]{Remark}
\newcommand{\secref}[1]{Section~\ref{#1}}
\newcommand{\thmref}[1]{Theorem~\ref{#1}}
\newcommand{\propref}[1]{Proposition~\ref{#1}}
\newcommand{\lemref}[1]{Lemma~\ref{#1}}
\newcommand{\corref}[1]{Corollary~\ref{#1}}
\def\Hom{\mathrm{Hom}}
\def\cat{\mathrm{cat}}
\def\secat{\mathrm{secat}}
\def\wgt{\mathrm{wgt}}
\def\dim{\mathrm{dim}}
\def\ker{\mathrm{ker}}
\def\nil{\mathrm{nil}}
\newcommand{\be}{\begin{enumerate}}
\newcommand{\ee}{\end{enumerate}}
\newcommand{\Z}{\mathbb{Z}}
\newcommand{\Q}{\mathbb{Q}}
\newcommand{\TC}{{\sf TC}}
\begin{document}

\title[Spaces of topological complexity one]{Spaces of topological complexity one}

\author{Mark Grant}
\author{Gregory Lupton}
\author{John Oprea}

\address{School of Mathematical Sciences, The University of Nottingham,
University Park, Nottingham, NG7 2RD, UK}

\email{mark.grant@nottingham.ac.uk}

\address{Department of Mathematics, Cleveland State University, Cleveland OH 44115 U.S.A.}

\email{g.lupton@csuohio.edu}
\email{j.oprea@csuohio.edu}

\date{\today}

\keywords{Lusternik-Schnirelmann category, topological complexity, topological robotics, acyclic space, co-H-space, homology sphere}
\subjclass[2010]{55M30  55S40}

\begin{abstract} We prove that a space whose topological complexity equals $1$ is homotopy equivalent to some odd-dimensional sphere.  We prove a similar result, although not in complete generality, for spaces $X$ whose higher topological complexity $\TC_n(X)$ is as low as possible, namely $n-1$.   
\end{abstract}

\thanks{This work was partially supported by a grant from the Simons Foundation (\#209575 to Gregory Lupton).}

\maketitle

\section{Introduction}
The \emph{topological complexity} of a space is a numerical homotopy invariant, of Lusternik-Schnirelmann type, introduced by Farber \cite{Far03} and motivated by the motion planning problem in the field of topological robotics.

Here, we use $\cat(X)$ to denote the Lusternik-Schnirelmann (L-S) category of $X$ (normalised, so that $\cat(S^n) = 1$), and we use $\secat(p)$ to denote the sectional category of a fibration $p$ (normalized, so  that $\secat(p) = 0$ when $p$ admits a section).  Then we define $\TC(X)$, the \emph{topological complexity}  of $X$, to be the sectional category $\secat\big( P_2\big)$ of the fibration $P_2 \colon PX \to X\times X$, which evaluates a (free) path in $X$ at its initial and final points.  See the next section for a review of terminology and precise, more verbose definitions.

The basic inequalities that relate $\cat(-)$ and $\TC(-)$ are
\begin{equation}\label{eq:basic ineq} 
\cat(X) \leq \TC(X) \leq \cat(X \times X).
\end{equation} 
It follows from the definition that we have $\cat(X) = 0$ exactly when $X$ is contractible.  It is also easy to show that $\TC(X) = 0$ exactly when $X$ is contractible.  In this paper, we consider the next step, namely when these invariants equal $1$.  As is well-known, $\cat(X) = 1$ corresponds to the case in which $X$ is a co-H-space.  This is a large class of spaces which includes all suspensions.  In addition there are well-known examples of co-H-spaces that are not suspensions.  By contrast, we find that the class of spaces with $\TC(X) = 1$ is very restricted.  By inequality (\ref{eq:basic ineq}), if $\TC(X) = 1$, then $X$ must be a co-H-space, i.e., we must have $\cat(X) = 1$.  Further, we show the following.  

\begin{introtheorem}[\corref{cor: main}]\label{main} Let $X$ be a path-connected CW complex of finite type. If \hbox{$\TC(X)=1$}, then $X$ is homotopy equivalent to some sphere $S^{2r+1}$ of odd dimension, $r \geq 0$.
\end{introtheorem}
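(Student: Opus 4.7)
The plan is to extract strong cohomological constraints from $\TC(X)=1$, identify the integral homology of $X$ as that of an odd-dimensional sphere, and then upgrade this to a homotopy equivalence via Whitehead's theorem. The starting point is the basic inequality $\cat(X) \leq \TC(X) = 1$, which makes $X$ a co-H-space; consequently $\pi_1(X)$ is a free group and all positive-degree cup products in $\widetilde H^*(X;k)$ vanish for every coefficient field $k$.

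The key calculation uses the zero-divisor cup-length lower bound for $\TC$. Given $\alpha, \beta \in H^+(X;k)$, the vanishing of cup products in a co-H-space gives
\[
(\alpha \otimes 1 - 1 \otimes \alpha)(\beta \otimes 1 - 1 \otimes \beta) \;=\; -\alpha \otimes \beta - (-1)^{|\alpha||\beta|} \beta \otimes \alpha
\]
in $H^*(X \times X; k)$; since $\TC(X) \le 1$, this must vanish. Taking $\alpha = \beta$ forces $|\alpha|$ odd whenever $\mathrm{char}\, k \neq 2$, and taking $\alpha, \beta$ linearly independent forces $\dim_k H^+(X;k) \leq 1$. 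Sweeping over all coefficient fields and applying the Universal Coefficient Theorem pins down $H_*(X;\Z) \cong H_*(S^{2r+1};\Z)$ for some $r \geq 0$, once we exclude the alternative of $X$ being integrally acyclic: an acyclic co-H-space has $\pi_1$ free with trivial abelianization, so $\pi_1 = 0$, hence $X$ would be simply connected and acyclic, therefore contractible, contradicting $\TC(X) > 0$.

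It remains to identify $X$ with $S^{2r+1}$. For $r \geq 1$, the vanishing $H_1(X) = 0$ kills $\pi_1(X)$ (a free group with trivial abelianization is trivial), and a Hurewicz-representative $S^{2r+1} \to X$ induces an isomorphism on $H_*$, so Whitehead's theorem concludes. The delicate case is $r = 0$, where $\pi_1(X) \cong \Z$ is nontrivial. Here a generator gives $f \colon S^1 \to X$ inducing isomorphisms on $\pi_1$ and on $H_*(-;\Z)$, and the mapping cone $C_f$ is simply connected and integrally acyclic, hence contractible. I expect the main obstacle to be verifying that $f$ itself is a weak equivalence, not merely a homology equivalence with contractible cofiber. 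This should follow by analysing the Serre spectral sequence of $\tilde X \to X \to K(\Z,1) = S^1$, which forces both the $\Z$-invariants and $\Z$-coinvariants of $H_i(\tilde X)$ to vanish for $i \geq 2$; combined with the co-H-structure on $X$ and the finite-type hypothesis, this should rule out exotic $\Z[\Z]$-module structures on $H_*(\tilde X)$ and yield $\tilde X \simeq *$, so that $X \simeq K(\Z,1) = S^1$.
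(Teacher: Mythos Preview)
Your argument tracks the paper's proof closely: the same zero-divisor computation (the $n=2$ case of \lemref{lem: higher zero divisors}), the same sweep over coefficient fields followed by the UCT to force $X$ to be an integral homology sphere (this is \propref{prop: homology sphere}), and the same Hurewicz--Whitehead conclusion in the simply connected case. Your disposal of the acyclic alternative, via ``free group with trivial abelianisation is trivial,'' is in fact a little cleaner than the paper's route through the general \thmref{thm: TC_n = n-1}.

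The genuine gap is the case $r=0$, and you have correctly located it yourself. The Wang/Serre sequence for $\tilde X \to X \to S^1$ does give that $(t-1)$ acts bijectively on $H_i(\tilde X)$ for $i\ge 2$, i.e.\ both $\Z$-invariants and $\Z$-coinvariants vanish. But this is \emph{not} enough to force $H_i(\tilde X)=0$: for instance the $\Z[\Z]$-module $\Z$ with $t$ acting by multiplication by $2$ has $(t-1)$ acting as the identity, hence vanishing invariants and coinvariants, and it is finitely generated over both $\Z$ and $\Z[\Z]$. Concretely, $X=(S^1\vee S^2)\cup_{t-2} e^3$ is a finite homology circle with $\pi_1=\Z$ and $H_2(\tilde X)\cong\Z\neq 0$; what one must show is that such an $X$ cannot be a co-$H$-space, and nothing in your sketch does this. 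The ``combined with the co-$H$-structure and finite type'' clause is exactly where a nontrivial ingredient is needed.

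The paper supplies that ingredient by quoting a theorem of Iwase--Saito--Sumi \cite[Theorem~2.1]{ISS}, which for a finite-type co-$H$-space relates $H_*(\tilde X)$ to $H_*(X)$ in a way that forces $H_*(\tilde X)=0$ for $*>1$ once $H_*(X)=0$ for $*>1$. This is not a routine spectral-sequence manipulation; it uses the co-$H$-structure in an essential way. Absent a direct proof of this step (or an alternative, such as showing that a co-$H$-space with $\pi_1\cong\Z$ is nilpotent and then invoking Dror's Whitehead theorem --- which is also not obvious), your $r=0$ case remains incomplete.
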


The converse of  \thmref{main} is also true: for $r \geq 0$, we have $\TC(S^{2r+1}) = 1$ \cite[Th.8]{Far03} (note that \cite{Far03} uses ``un-normalized" $\TC(-)$, which is one more than our $\TC(-)$). 

If we also assume that $X$ is a closed manifold, then  \thmref{main} together with the positive solution to the topological Poincar\' e conjecture yield the following:

\begin{introcorollary} If $X$ is a closed manifold with $\TC(X)=1$, then $X$ is homeomorphic to some sphere of odd dimension. \qed
\end{introcorollary}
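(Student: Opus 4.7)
The plan is to combine the basic inequality $\cat(X)\le\TC(X)$ with the standard cohomological (zero-divisors) lower bound for topological complexity. Since $\TC(X)=1$ and $X$ is not contractible, we have $\cat(X)=1$, so $X$ is a co-H-space. In particular, the cup product on $\tilde H^{*}(X;k)$ is identically zero for every field $k$, and by Berstein's theorem $\pi_{1}(X)$ is a free group.

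The key computation is as follows. For $u\in\tilde H^{*}(X;k)$, the class $\bar u:=1\otimes u - u\otimes 1\in H^{*}(X\times X;k)$ lies in the kernel of the cup product $\Delta^{*}\colon H^{*}(X\times X;k)\to H^{*}(X;k)$ and is thus a $\TC$-zero-divisor. Using the vanishing of cup products in $\tilde H^{*}(X;k)$ together with the Koszul sign rule in the Künneth decomposition, one finds
\[
\bar u\cdot\bar v \;=\; -\,u\otimes v \;-\;(-1)^{|u||v|}\,v\otimes u,
\qquad
\bar u\cdot\bar u \;=\; -\bigl(1+(-1)^{|u|}\bigr)\,u\otimes u.
\]
If $u,v\in\tilde H^{*}(X;k)$ are linearly independent, the first expression is nonzero in $H^{*}(X;k)\otimes H^{*}(X;k)$, forcing $\TC(X)\ge 2$ and contradicting the hypothesis. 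Hence $\dim_{k}\tilde H^{*}(X;k)\le 1$ for every field $k$. If moreover $\mathrm{char}(k)\ne 2$ and $u\ne 0$ has even degree, then $\bar u\cdot\bar u=-2(u\otimes u)\ne 0$, so the (at most one) nonzero class in $\tilde H^{*}(X;k)$ must have odd degree.

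Combining these field-by-field constraints via the Universal Coefficient Theorem, I would conclude that $H_{*}(X;\Z)$ is free of total rank one and concentrated in a single odd degree $2r+1$, so that $X$ is an integral homology sphere $S^{2r+1}$. Any $p$-torsion cyclic summand in $H_{n}(X;\Z)$ contributes simultaneously to $\Hom(H_{n},\F_{p})$ in degree $n$ and to $\Ext(H_{n},\F_{p})$ in degree $n+1$, which would inflate $\dim_{\F_{p}}H^{*}(X;\F_{p})$ beyond $1$; and the alternative that $X$ is acyclic is ruled out because a co-H-space with $H_{*}(X;\Z)=0$ has free $\pi_{1}$ of trivial abelianisation, hence is simply connected and acyclic, hence contractible, contradicting $\TC(X)=1$.

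For the final step I would upgrade the integral homology conclusion to a homotopy equivalence. When $r\ge 1$ we have $H_{1}(X)=0$, so the free group $\pi_{1}(X)$ has trivial abelianisation and is therefore trivial; a map $S^{2r+1}\to X$ representing a generator of $\pi_{2r+1}(X)\cong\Z$ (Hurewicz) is a homology isomorphism between simply connected CW complexes, hence a homotopy equivalence by Whitehead's theorem. The case $r=0$ requires additional input: here $\pi_{1}(X)\cong\Z$ (the only free group with abelianisation $\Z$), and one must verify that a co-H-space with the integral homology of $S^{1}$ is homotopy equivalent to $S^{1}$. This is the step I expect to be the main obstacle, as it requires ruling out exotic $\pi_{1}(X)=\Z$ behaviour on higher homotopy, handled either by a direct proof that the universal cover is acyclic (hence contractible) via a transfer or Serre spectral sequence argument, or by appealing to a splitting result for co-H-spaces with infinite cyclic fundamental group.
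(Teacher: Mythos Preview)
Your argument for the homotopy-equivalence part is essentially the paper's own route: the zero-divisors computation (which the paper packages as \lemref{lem: higher zero divisors} in the case $n=2$) forces $\dim_{\Bbbk}\widetilde H^*(X;\Bbbk)\le 1$ for every field, the UCT step is the paper's \propref{prop: homology sphere}, and the case split according to $\pi_1$ is exactly parts (A) and (C) of \thmref{thm: TC_n = n-1}. For the $r=0$ step you flag as the obstacle, the paper does precisely what you anticipate in your second alternative: it invokes a structural result about co-H-spaces, namely the theorem of Iwase--Saito--Sumi, which shows that the universal cover of a finite-type co-H-space that is an integral homology circle is acyclic, hence contractible, so $X\simeq S^1$.

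There is, however, a genuine omission. The statement you are asked to prove concludes \emph{homeomorphism}, not homotopy equivalence, and your proposal stops at the latter. The paper's proof of this corollary is a single sentence: combine \thmref{main} (your homotopy-equivalence argument) with the positive solution to the topological Poincar\'e conjecture, which says that a closed manifold homotopy equivalent to $S^m$ is homeomorphic to $S^m$. Without this step the conclusion does not follow; there is no way to extract a homeomorphism from the zero-divisors and co-H arguments alone.
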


We also consider the ``higher analogues"  of topological complexity introduced by Rudyak in \cite{Rud10} (see also \cite{Rud10b} and \cite{BGRT10}).  This notion may also be motivated by a motion planning problem of a constrained type (see  \cite[Rem.3.2.7]{Rud10}).  For $n \geq 3$, we define $\TC_n(X)$, the \emph{higher topological complexity}  of $X$, as the sectional category $\secat(P_n)$ of the fibration $P_n \colon PX \to X^n$, which evaluates a (free) path in $X$ not only at its initial and final points, but also at $(n-2)$ equally timed intermediate points as well.   Again, see the next section for full definitions.  As a matter of notation, we may write $\TC(X) = \TC_2(X)$. 

The inequalities (\ref{eq:basic ineq}) extend to the following (\cite[Cor.3.3]{BGRT10}, \cite{Lu-Sc12}):
\begin{equation}\label{eq:higher basic ineq} 
\cat(X^{n-1}) \leq \TC_n(X) \leq \cat(X^n),
\end{equation} 
for $n \geq 2$.
Now, if $X$ is not contractible, then $\cat(X^{n-1}) \geq n-1$ \cite[Th.1.47]{CLOT03}.  Therefore, the next step, in the spirit of our first result, is to consider the case in which $\TC_n(X) = n-1$.  For $n\geq 3$,  there are some subtleties that arise, because of the non-straightforward way in which $\cat(-)$ may behave with respect to products.  We prove a result (\thmref{thm: TC_n = n-1}) that substantially handles this situation, and in a way that very naturally extends \thmref{main}.  For instance, our result includes the following.

\begin{introtheorem}\label{main:higher} Let $X$ be a path-connected CW-complex of finite type.  Suppose that we have  \hbox{$\TC_n(X)=n-1$}, some $n \geq 3$.  If $X$ is simply connected, then $X$ is homotopy equivalent to some odd-dimensional sphere $S^{2r+1}$,   $r \geq 1$.  If $\pi_1(X) \not= \{e\}$, and $X$ is a nilpotent space, then $X$ is homotopy equivalent to the circle $S^1$.
\end{introtheorem}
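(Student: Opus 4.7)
The plan is to impose strong cohomological restrictions on $X$ by analysing products of zero-divisors in $H^*(X^n;k)$, and then apply Hurewicz-Whitehead to pin down the homotopy type. As with \thmref{main}, the main tool is the classical lower bound on $\TC_n(X)$ by the cup-length of the ideal $\ker\Delta_n^* \subset H^*(X^n;k)$, where $\Delta_n\colon X\to X^n$ is the iterated diagonal. Specifically, I would establish: \emph{for every field $k$, $H^*(X;k) \cong H^*(S^{m_k};k)$ for some integer $m_k\geq 1$.} For any nonzero $\alpha \in \tilde H^*(X;k)$, the elements $\bar\alpha_i = 1^{\otimes (i-1)}\otimes\alpha\otimes 1^{\otimes (n-i)} - \alpha\otimes 1^{\otimes (n-1)} \in H^*(X^n;k)$ lie in $\ker\Delta_n^*$ (identifying $H^*(X^n;k)$ with $H^*(X;k)^{\otimes n}$ by K\"unneth, valid since $X$ is of finite type). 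The key claim is that whenever $\alpha,\beta \in \tilde H^*(X;k)$ are $k$-linearly independent, the length-$n$ product $\bar\alpha_2\,\bar\alpha_3\cdots\bar\alpha_n\cdot\bar\beta_2$ is nonzero, forcing $\TC_n(X)\geq n$, contrary to hypothesis. To verify it, expand as a sum of $2^n$ tensor monomials: the choice of ``$\alpha^{(i)}$ from every $\bar\alpha_i$ and $-\beta^{(1)}$ from $\bar\beta_2$'' contributes the monomial $\pm\,\beta\otimes\alpha^{\otimes(n-1)}$, and no other choice reaches this same tensor monomial---any alternative either places an $\alpha$ into slot $1$ alongside the $\beta$, or places $\beta$ into slot $2$ alongside an $\alpha$, producing a different tensor factor in some slot. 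Applied with $\beta=\alpha^2$ whenever $\alpha^2\neq 0$, the same argument forces $\alpha^2=0$, so $\tilde H^*(X;k)$ is one-dimensional and concentrated in a single positive degree $m_k$.

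For the simply connected case, universal coefficients now give $H_*(X;\Z) \cong H_*(S^m;\Z)$ for some $m$, and $m$ must be odd: if $m=2r$, then $|\alpha|$ is even and $\bar\alpha_2^{\,2} = -2\,\alpha\otimes\alpha\otimes 1^{\otimes(n-2)}$, which multiplies with $\bar\alpha_3\cdots\bar\alpha_n$ to give the nonzero length-$n$ product $\pm\,2\,\alpha^{\otimes n}$ rationally, contradicting $\TC_n(X)=n-1$. Hurewicz and the classical Whitehead theorem then upgrade the integer homology equivalence to a homotopy equivalence $X\simeq S^{2r+1}$, with $r\geq 1$ forced by simple-connectivity.

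For the nilpotent case with $\pi_1(X)\neq\{e\}$, I apply the step-one conclusion over $\F_p$ and over $\Q$. Any $p$-torsion in $\pi_1(X)^{ab}=H_1(X;\Z)$ would contribute simultaneously to $H^1(X;\F_p)$ via $\mathrm{Hom}(\Z/p,\F_p)$ and to $H^2(X;\F_p)$ via $\mathrm{Ext}(\Z/p,\F_p)$, contradicting the single-summand conclusion; hence $\pi_1(X)^{ab}$ is torsion-free. Over $\Q$, the bound $\dim_\Q H^1(X;\Q)\leq 1$ constrains $\pi_1(X)^{ab}\otimes\Q$, and since a nontrivial nilpotent group has nontrivial abelianization, one obtains $\pi_1(X)^{ab}\cong\Z$. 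A parallel universal-coefficients argument gives $H_*(X;\Z)\cong H_*(S^1;\Z)$, so the composite $f\colon X\to K(\pi_1(X),1)\to K(\pi_1(X)^{ab},1)=K(\Z,1)=S^1$ is an integer homology isomorphism between nilpotent CW complexes, and the nilpotent Whitehead theorem concludes $X\simeq S^1$.

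The main obstacle I anticipate is the K\"unneth-level bookkeeping in Step 1: organising the signs and verifying non-cancellation of the leading monomial $\beta\otimes\alpha^{\otimes(n-1)}$ among the $2^n$ terms of the expansion, independently of the characteristic of $k$ and of the parities of $|\alpha|$ and $|\beta|$.
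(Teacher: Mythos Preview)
Your proposal is correct and follows essentially the same route as the paper: constrain $\tilde H^*(X;k)$ to dimension at most one via explicit $n$-fold zero-divisor products (the paper packages this computation as a separate lemma, expressing $(b_1-b_2)(a_1-a_2)\cdots(a_1-a_n)$ modulo an ideal rather than tracking the single monomial $\beta\otimes\alpha^{\otimes(n-1)}$, but the content is the same), pass to integral homology via the UCT, rule out even spheres by the same mechanism over $\Q$, and finish with Hurewicz--Whitehead in the simply connected case and the nilpotent Whitehead theorem in the other. The only cosmetic difference is that in the nilpotent case the paper builds a map $S^1\to X$ hitting a Hurewicz generator while you build $X\to K(\Z,1)=S^1$; either direction yields an integral homology equivalence of nilpotent spaces, so both conclude via Dror's theorem.
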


Conversely, it is known that, for $r \geq 0$, we have $\TC_n(S^{2r+1}) = n-1$ (\cite[Sec.4]{Rud10}---note that un-normalized $\TC(-)$ is used there).  This completely describes the situation in the nilpotent case.  Notice that our results imply that, for $X$ nilpotent, if $\TC_n(X)=n-1$ for some $n \geq 2$, then we have $\TC_n(X)=n-1$ \emph{for all}  $n \geq 2$.   Our actual results do give partial information about the general, non-nilpotent situation (see \thmref{thm: TC_n = n-1} for details).  Once more, if we also assume that $X$ is a closed manifold, then we may replace ``homotopy equivalent'' in the conclusions of \thmref{main:higher} by ``homeomorphic." 

From our results, we identify precisely how (higher) topological complexity behaves for co-H-spaces:

\begin{introcorollary}[\corref{cor: higher TC Co-H}]
Let $X$ be a non-contractible, path-connected CW complex of finite type.  If $X$ is a  co-H-space, then either (a) $X$ is of the homotopy type of some odd-dimensional sphere, and we have $\TC_n(X) = n-1$ for all $n \geq 2$; or (b) we have $\TC_n(X) =  n$ for all $n \geq 2$.  
\end{introcorollary}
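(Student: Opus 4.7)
The plan is to combine the sandwich inequality (\ref{eq:higher basic ineq}) with a K\"unneth-based cup length computation for co-H-spaces to pin $\TC_n(X)$ down to just two possible values, and then use \thmref{main} together with the stronger \thmref{thm: TC_n = n-1} to tell those two values apart.

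First I would establish that for any non-contractible co-H-space $X$ of finite type, $\cat(X^n) = n$ for every $n \geq 1$. The upper bound follows from the standard submultiplicativity of category, $\cat(X^n) \leq n\cat(X) = n$, since $\cat(X) = 1$ for any co-H-space. For the lower bound, pick any nontrivial $u \in \tilde H^{*}(X;\mathbb{Q})$; by K\"unneth (using the finite type assumption) the $n$-fold external product $u \times \cdots \times u$ is nonzero in $H^*(X^n;\mathbb{Q})$, and it witnesses cup length $\geq n$, hence $\cat(X^n) \geq n$. Feeding this into (\ref{eq:higher basic ineq}) gives
\[
n-1 \;\leq\; \TC_n(X) \;\leq\; n
\qquad\text{for every } n \geq 2.
\]

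Next I would split on whether $X$ is an odd sphere. If $X \simeq S^{2r+1}$, then $\TC_n(X) = n-1$ for every $n \geq 2$ by the result of Rudyak cited in the paper, placing us in case (a). If $X$ is not homotopy equivalent to an odd-dimensional sphere, then \thmref{main} applied to $n=2$ rules out $\TC_2(X) = 1$, forcing $\TC_2(X) = 2$. For each $n \geq 3$, I would argue by contradiction: if $\TC_n(X) = n-1$, then the full higher-dimensional result \thmref{thm: TC_n = n-1} would force $X$ to be homotopy equivalent to some $S^{2r+1}$, contradicting the standing assumption in this case. Hence $\TC_n(X) = n$ for all $n \geq 2$, placing us in case (b).

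The delicate step is this last contrapositive use of \thmref{thm: TC_n = n-1} in the possibly non-nilpotent situation: the abbreviated statement \thmref{main:higher} covers only the simply-connected and nilpotent cases, so the body of \thmref{thm: TC_n = n-1} must be strong enough to force an arbitrary co-H-space with $\TC_n(X) = n-1$ to be an odd sphere. Beyond this bookkeeping, everything else in the proof is routine manipulation of (\ref{eq:higher basic ineq}) and the cup length inequality.
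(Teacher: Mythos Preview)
Your overall strategy is the same as the paper's (which simply records the corollary with a \qed): sandwich $\TC_n(X)$ between $n-1$ and $n$ using (\ref{eq:higher basic ineq}) and $\cat(X)=1$, then invoke parts (A) and (C) of \thmref{thm: TC_n = n-1} to show that $\TC_n(X)=n-1$ forces $X$ to be an odd sphere. Your identification of part (C) as the relevant clause for the possibly non-nilpotent co-H-space case is exactly right.

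There is, however, a genuine gap in your lower-bound step. You write ``pick any nontrivial $u \in \tilde H^{*}(X;\mathbb{Q})$'', but a non-contractible co-H-space of finite type need not have any nontrivial rational cohomology: for instance a mod-$p$ Moore space $S^m \cup_p e^{m+1}$ is a suspension, hence a co-H-space, yet is rationally acyclic. So your K\"unneth cup-length argument over $\mathbb{Q}$ does not go through in general. The fix is either to replace $\mathbb{Q}$ by a field $\Bbbk$ for which $\tilde H^*(X;\Bbbk)\neq 0$ (which exists since $X$ is non-contractible of finite type), or---more simply---to note that you do not need $\cat(X^n)=n$ at all: the lower bound $\TC_n(X)\geq \cat(X^{n-1})\geq n-1$ already follows from the general fact (cited in the paper from \cite[Th.1.47]{CLOT03}) that $\cat(Y^{k})\geq k$ whenever $Y$ is non-contractible, with no cohomological input required. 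Once that is repaired, the rest of your argument is correct and coincides with the paper's.
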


The paper is organized as follows.  In \secref{sec: Prelim} we review basic definitions and vocabulary, and establish two intermediate results:   \propref{prop: homology sphere} is basic for what follows;  \propref{prop: pi_1 finite} gives an interesting lower bound for $\cat(X^n)$.  In \secref{sec: TC = 1}, we prove our main result about $\TC_n(X) = n-1$, from which we conclude our result about  $\TC(X) = 1$.    

\section{Definitions and Preliminary Results}\label{sec: Prelim}

We refer to \cite{CLOT03} for a general introduction to L-S category and related topics, such as sectional category.  Here, we recall that  $\cat(X)$ is the smallest $n$ for which there is an open covering $\{ U_0, \ldots, U_n \}$ by $(n+1)$ open sets, each of which is contractible in $X$.  
The \emph{sectional category} of a fibration $p \colon E \to B$, denoted by $\secat(p)$, is the smallest number $n$ for which there is an open covering $\{ U_0, \ldots, U_n \}$ of $B$ by $(n+1)$ open sets, for each of which there is a local section $s_i \colon U_i \to E$ of  $p$, so that $p\circ s_i = j_i \colon U_i \to B$, where $j_i$ denotes the inclusion.

 Let $PX$ denote the space of (free) paths on a space $X$.  There is a fibration $P_2\colon PX \to X\times X$, which evaluates a path at initial and final point: for $\alpha \in PX$, we have $P_2(\alpha) = \big(\alpha(0), \alpha(1)\big)$.  This is a fibrational substitute for the diagonal map $\Delta \colon X \to X \times X$.  We define the \emph{topological complexity} $\TC(X)$ of $X$ to be the sectional category $\secat\big( P_2\big)$ of this fibration.  That is, $\TC(X)$ is the smallest number $n$ for which there is an open cover $\{ U_0, \ldots, U_n \}$ of $X \times X$ by $(n+1)$ open sets, for each of which there is a local section $s_i \colon U_i \to PX$ of  $P_2$, i.e., for which $P_2\circ s_i = j_i \colon U_i \to X \times X$, where $j_i$ denotes the inclusion.

More generally,  let $n \geq 2$ and consider the fibration
$$P_n \colon PX \to X \times \cdots \times X = X^n,$$
defined by dividing the unit interval $I = [0, 1]$ into $(n-1)$ subintervals of equal length, with $n$ subdivision points $t_0 = 0, t_1 = 1/(n-1), \ldots, t_{n-1} = 1$ (thus $(n-2)$ subdivision points interior to the interval), and then evaluating at each of the $n$ subdivision points, thus:
$$P_n(\alpha) = \big(  \alpha(0), \alpha(t_1), \ldots, \alpha(t_{n-2}), \alpha(1)\big),$$
for $\alpha \in PX$.  This is a fibrational substitute for the $n$-fold diagonal $\Delta_n\colon X \to X^n$.   Then the \emph{higher topological complexity} $\TC_n(X)$ is defined as $\TC_n(X) = \secat(P_n)$.

Let $H_*(X)$, respectively $\widetilde{H}_*(X)$, denote homology, respectively reduced homology, with integer coefficients.
By $\dim_{\Bbbk}\big(\widetilde{H}^*(X;\Bbbk)\big)$, we mean the dimension as a graded $\Bbbk$-vector space of the reduced cohomology of $X$ with coefficients in the field $\Bbbk$. In this paper, by an \emph{integral homology sphere}, we mean a space $X$ with integral homology isomorphic to that of $S^n$ for some $n \geq 1$.  (Note that, here, we do not implicitly assume that $X$ is a manifold.)  By a CW complex of \emph{finite type}, we mean one that has finitely many cells of each dimension.  Note that a CW complex of finite type has integral homology group $H_i(X)$ a finitely generated abelian group, for each $i$.   In the proof of the following result, and in the sequel, we make use of the universal coefficient theorem for cohomology (UCT), as given in \cite[Th.3.2]{Hat}, for instance.

\begin{proposition}\label{prop: homology sphere}
Let $X$ be a path-connected CW complex of finite type.  Suppose that $\dim_{\Bbbk}\big(\widetilde{H}^*(X;\Bbbk)\big) \leq 1$ for all choices of field $\Bbbk$.  Then either $X$ is acyclic, or $X$ is an integral homology sphere. 
\end{proposition}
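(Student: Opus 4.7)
The plan is to apply the universal coefficient theorem (UCT) with two choices of field: first with $\Bbbk=\Q$ to control the ranks of the integral homology groups, and then with $\Bbbk=\mathbb{F}_p$ for each prime $p$ to rule out torsion. Since $X$ is a finite-type CW complex, each $H_i(X;\Z)$ is finitely generated, so the UCT gives $H^i(X;\Bbbk)\cong \Hom(H_i(X;\Z),\Bbbk)\oplus \Ext(H_{i-1}(X;\Z),\Bbbk)$ and we may play Hom against Ext.

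First I would take $\Bbbk=\Q$: here $\Ext$ with $\Q$-coefficients vanishes on finitely generated abelian groups, so $\dim_{\Q}\widetilde{H}^i(X;\Q)=\rank H_i(X;\Z)$ for $i\geq 1$. The hypothesis then forces
\[
\sum_{i\geq 1}\rank H_i(X;\Z)\leq 1,
\]
so either every $H_i(X;\Z)$ for $i\geq 1$ is a torsion group, or there is a unique $n\geq 1$ with $H_n(X;\Z)\cong\Z\oplus T_n$ (and $H_i(X;\Z)$ torsion for all other $i\geq 1$).

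The heart of the proof is to eliminate any torsion. Suppose, for contradiction, that some $H_i(X;\Z)$ contains a $\Z/p^k$ summand, with $i\geq 1$ and $p$ prime. This single summand contributes a nonzero $\mathbb{F}_p$-factor to $\Hom(H_i(X;\Z),\mathbb{F}_p)$ and, independently, to $\Ext(H_i(X;\Z),\mathbb{F}_p)$. By UCT the former injects into $H^i(X;\mathbb{F}_p)$ and the latter into $H^{i+1}(X;\mathbb{F}_p)$. Because $i\geq 1$, both contributions sit in positive degrees, so they cannot be absorbed into $H^0$ and cannot coincide with one another. Together with the $\mathbb{F}_p$-summand forced in $H^n(X;\mathbb{F}_p)$ by any $\Z$-summand in $H_n(X;\Z)$, this shows $\dim_{\mathbb{F}_p}\widetilde{H}^*(X;\mathbb{F}_p)\geq 2$, contradicting the hypothesis.

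Putting the two steps together, every $H_i(X;\Z)$ with $i\geq 1$ is torsion-free; combined with the rank bound, either $H_i(X;\Z)=0$ for all $i\geq 1$, in which case $X$ is acyclic, or exactly one $H_n(X;\Z)\cong\Z$ is nonzero (with $n\geq 1$), in which case $X$ is an integral homology sphere. The only delicate point is the torsion bookkeeping: one must check that the Hom- and Ext-contributions from a single $\Z/p^k$ summand land in two distinct positive degrees, and that they cannot be cancelled by or confused with the free-rank contribution in degree $n$. This is immediate from the degrees involved, so the argument goes through cleanly.
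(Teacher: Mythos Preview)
Your argument is correct and uses essentially the same ingredients as the paper's proof: the universal coefficient theorem together with the key observation that a $\Z/p^k$ summand in $H_i(X;\Z)$ forces nonzero $\mathbb{F}_p$-cohomology in the two consecutive degrees $i$ and $i+1$. The only difference is organizational: the paper proceeds inductively from the lowest nonvanishing integral homology group, whereas you bound the total rational rank globally and then eliminate torsion in one stroke; the mention of the extra $\mathbb{F}_p$-contribution from a free $\Z$-summand is harmless but unnecessary, since the Hom/Ext pair from torsion already gives $\dim_{\mathbb{F}_p}\widetilde{H}^*(X;\mathbb{F}_p)\geq 2$ on its own.
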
 

\begin{proof}
If $X$ is acyclic, then $\widetilde{H}_*(X) = 0$ and hence $\widetilde{H}^*(X;\Bbbk) = 0$ for all choices of field $\Bbbk$.  
So suppose that $X$ is not acyclic, and let $H_r(X)$ be the first non-trivial homology group of $X$, $r \geq 1$.  Since $X$ is of finite type,  we may write
\[
H_r(X)\cong \Z^n\qquad\mbox{or}\qquad H_r(X)\cong \Z^n\oplus\Z/p^{k}\oplus\Z/p_1^{k_1}\oplus\cdots\oplus\Z/p_\ell^{k_\ell}
\]
for some rank $n\geq 0$, primes $p\leq p_1\leq\cdots \le p_\ell$ and natural numbers $k, k_1,\ldots, k_\ell$.

First suppose the torsion part of $H_r(X)$ is non-trivial, so that at least the summand $\Z/p^{k}$ is non-zero.  By the UCT we have
\[
H^r(X;\Z/p)\cong \operatorname{Hom}(H_r(X),\Z/p)\cong \Z/p\oplus S,
\]
\[
H^{r+1}(X;\Z/p)\supseteq\operatorname{Ext}(H_r(X),\Z/p)\cong\Z/p\oplus T,
\]
where $S$ and $T$ are some finite $\Z/p$-vector spaces. It then follows that we have $\dim_{\Z/p}\big(\widetilde{H}^*(X;\Z/p)\big) \geq 2$, which contradicts our assumption. Thus $H_r(X)$ is torsion-free.

Now suppose that $H_r(X) \cong \Z^n$ with rank $n\geq 2$. Then $H^r(X;\Q)\cong\Q^n$ is of dimension at least $2$, which again contradicts our assumption. We conclude that $H_r(X)\cong \Z$.

Now consider homology groups in higher degrees, starting with $H_{r+1}(X)$.   Because $H_r(X)\cong \Z$, a similar argument to the above, using the UCT and then rational coefficients,  shows that $H_{r+1}(X) = 0$.  Then, arguing inductively, one sees that $H_i(X)=0$ for all $i>r$, and thus $X$ is an integral homology $r$-sphere.
\end{proof}

Our next result seems of interest in its own right, as a general statement about the L-S category of products.  In its proof, we use the notion of the \emph{category weight} of a cohomology class, which is commonly used to obtain lower bounds on L-S category.  A general discussion of this notion is given in \cite[Sec.2.7, Sec.8.3]{CLOT03}.

\begin{proposition}\label{prop: pi_1 finite} 
Let  $X$ be a path-connected CW complex whose fundamental group has a non-trivial element of finite order.  Then we have $\cat(X^n) \geq 2n$, for each $n \geq 1$.
\end{proposition}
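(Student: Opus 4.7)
The plan is to exhibit, for each $n \geq 1$, a non-zero cohomology class on $X^n$ of category weight at least $2n$; since $\cat(X^n)$ is bounded below by the weight of any non-zero class, this gives $\cat(X^n) \geq 2n$.

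Fix a prime $p$ dividing the order of the given torsion element $g \in \pi_1(X)$, and work with $\F_p$ coefficients. The core step is to construct a non-zero class $u \in H^*(X;\F_p)$ with $\wgt(u) \geq 2$. Consider the classifying map $\phi \colon X \to K(\pi_1(X),1)$. Because $\langle g\rangle \cong \Z/p$ is a subgroup of $\pi_1(X)$, the mod-$p$ cohomology $H^*(K(\pi_1(X),1);\F_p)$ is non-trivial in infinitely many degrees and, in particular, contains non-zero $2$-fold cup products $v_1 \smile v_2$ of positive-degree classes. The plan is to choose such a product whose pullback to $X$ is non-zero and set $u := \phi^*(v_1 \smile v_2) = \phi^*(v_1)\smile \phi^*(v_2)$. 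Since every non-zero class pulled back from an Eilenberg--MacLane space has category weight at least $1$, the sub-additivity of weight under cup products yields $\wgt(u) \geq 1+1 = 2$.

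For the extension from $X$ to $X^n$, let $p_i\colon X^n \to X$ be the $i$-th projection. Category weight is preserved under pullback through arbitrary maps, so each $p_i^*(u) \in H^*(X^n;\F_p)$ retains weight at least $2$. By sub-additivity under cup products and the K\"unneth formula, the external $n$-fold product
\[
u^{\times n} \;=\; p_1^*(u)\smile p_2^*(u)\smile\cdots\smile p_n^*(u) \;\in\; H^*(X^n;\F_p)
\]
is non-zero with $\wgt(u^{\times n})\geq 2n$, giving the desired lower bound $\cat(X^n) \geq 2n$.

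The central obstacle is the first step---ensuring that some $2$-fold cup product in $H^*(K(\pi_1(X),1);\F_p)$ actually pulls back non-trivially along $\phi$. When $X$ itself has too little $\F_p$-cohomology to detect such a product, the argument must be refined by using local coefficients: specifically the Berstein--Schwarz class $\mathfrak{b}_X \in H^1(X; I(\pi_1 X))$, for which $\wgt(\mathfrak{b}_X^k) \geq k$, and the non-vanishing of $\mathfrak{b}_X^2$ is guaranteed by the infinite $\F_p$-cohomological dimension of any group with $p$-torsion. One then runs the same external-product argument with $\mathfrak{b}_X^2$ in place of $u$.
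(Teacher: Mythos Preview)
Your primary route---pulling back a nontrivial cup product from $K(\pi_1(X),1)$ along $\phi$ in constant $\Z/p$ coefficients---cannot succeed in general, as you yourself note: take $X$ to be the $2$-skeleton of the Poincar\'e homology sphere, so that $\pi_1(X)$ is the (perfect) binary icosahedral group and $\widetilde{H}^*(X;\Z/p)=0$ for every prime $p$; there is then nothing nonzero for $\phi^*$ to hit.

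Your Berstein--Schwarz fallback is in the right spirit but, as written, has two genuine gaps. First, infinite $\Z/p$-cohomological dimension of $\pi_1(X)$ gives $\mathfrak{b}^2\neq 0$ on $K(\pi_1(X),1)$, not on $X$; you still need that $\phi^*$ is injective on $H^2$ with these local coefficients (this is true, since the homotopy fibre $\widetilde{X}$ of $\phi$ is simply connected, but it must be argued). Second, and more seriously, ``the K\"unneth formula'' does not apply directly to the external product $(\mathfrak{b}_X^2)^{\times n}$: the coefficients $I(\pi)^{\otimes 2}\boxtimes\cdots\boxtimes I(\pi)^{\otimes 2}$ form a local system of $\Z$-modules, and K\"unneth over $\Z$ carries $\Tor$ terms that could in principle kill the cross product. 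This can be repaired by passing to a mod-$p$ Berstein--Schwarz class and invoking a K\"unneth theorem for external tensor products of $\Z/p$-local systems, but that is a nontrivial additional step which your sketch omits.

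The paper sidesteps both difficulties with a covering-space trick: choose a cover $Y\to X$ with $\pi_1(Y)\cong\Z/p$, so that one can work entirely with \emph{constant} $\Z/p$ coefficients. A weight-$2$ class $y=\beta(x)\in H^2(Y;\Z/p)$ is produced via the Bockstein associated to $0\to\Z/p\to\Z/p^2\to\Z/p\to 0$ (the point being that $H^1(Y;\Z/p^2)\to H^1(Y;\Z/p)$ is zero, so $\beta$ is injective on $H^1$). Ordinary K\"unneth over the field $\Z/p$ then gives $y^{\times n}\neq 0$ in $H^{2n}(Y^n;\Z/p)$ with weight at least $2n$, whence $\cat(Y^n)\ge 2n$; finally $\cat(X^n)\ge\cat(Y^n)$ because $Y^n$ covers $X^n$. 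This is both shorter and avoids local coefficients entirely.
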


\begin{proof}  By assumption, $\pi_1(X)$ has an element of prime order.  Hence, we may choose a cover $Y$ of $X$ whose fundamental group is the cyclic group $\Z/p$, with $p$ a prime.  Note that we then have $H_1(Y) \cong \Z/p$ also.  We look at the long exact cohomology sequence associated to the short exact sequence of coefficients 
$$\xymatrix{ 0 \ar[r] & \begin{displaystyle} \Z\slash p\end{displaystyle} \ar[r]^{\times p} & \begin{displaystyle}\Z/p^2\end{displaystyle} \ar[r]^{r_p} & \begin{displaystyle}\Z/p\end{displaystyle} \ar[r] & 0}$$
in which $r_p$ denotes  reduction mod $p$ (see, e.g.~\cite[Sec.3.E]{Hat}).   From the UCT, we have  $H^1(Y; \Z/p^2) \cong \Hom(\Z/p,\Z/{p^2}) \cong \Z/p$  (no finiteness assumptions on $Y$ are required here), and likewise $H^1(Y; \Z/p) \cong \Z/p$.  

The map  $H^1(Y;\Z/{p^2}) \to H^1(Y;\Z/p)$ induced by $r_p$ is zero. Therefore the Bockstein $\beta \colon H^1(Y;\Z/p) \to H^2(Y;\Z/p)$ is injective.  Set $y = \beta(x) \in H^2(Y;\Z/p)$, where $x \in H^1(Y;\Z/p)$ is a generator.  By \cite[Cor.4.7]{Rud99}, (see also \cite{Fa-Hu92}) the class $y$ has (strict, or essential) category weight at least $2$.  (Actually, \cite[Cor.4.7]{Rud99} is stated for odd primes $p$.  But if $p=2$, then we have $\beta(x) = Sq^1(x) = x\cup x$, which is certainly of weight $2$.)

Now note that the cross product $y \times y = p_1^*(y) \cup p_2^*(y) \in H^4(Y\times Y;Z/p)$ has category weight at least $4$.  This follows from standard properties of category weight, as summarized, for example, in \cite[Prop.8.22]{CLOT03}:  Here, $p_1, p_2\colon Y \times Y \to Y$ denote the projections onto either factor, and we denote by $\wgt(u)$ the (strict, or essential) category weight of a cohomology class $u$.  Then we have 
\begin{align*}
\wgt(y\times y) &= \wgt\big( p_1^*(y) \cup p_2^*(y)  \big) \geq \wgt\big( p_1^*(y)\big) +  \wgt\big( p_2^*(y)  \big) \\
&\geq \wgt( y)+  \wgt(y) = 4.    
\end{align*}
That the cross product $y \times y$ is nonzero follows from \cite[VII.Ex.7.15(1)]{Dold95}, since $Z/p$ is a field. By an easy inductive argument, we also have that the $n$-fold cross product $y \times \dots \times y \in H^{2n}( Y^n;Z/p)$ is nonzero, and has category weight at least $2n$.
 
Therefore, by \cite[Prop.8.22]{CLOT03}, we have $\cat(Y^n) \geq 2n$.  Hence,  since $Y^n$ covers $X^n$, and therefore  $\cat(Y^n) \leq \cat(X^n)$ (see \cite[Cor.1.45]{CLOT03}), we have $\cat(X^n)\geq 2n$ also.
\end{proof}

\begin{example}
Let $P$ denote the Poincar{\'e} $3$-sphere and $P^*$ denote its $2$-skeleton.  Then $P$ is an integral homology $3$-sphere, and $P^*$ is an acyclic space.  One might wonder whether $\TC_n(P) = n-1$ or $\TC_n(P^*) = n-1$ for some $n$.  However, $\pi_1(P) \cong \pi_1(P^*)$ is not torsion-free---it is a finite group of order $120$, in fact.  Hence \propref{prop: pi_1 finite} implies that both $\cat\big((P)^{n-1}\big)$ and $\cat\big((P^*)^{n-1}\big)$ are at least $2n-2$, for each $n \geq 2$, and thus $\TC_n(P) , \TC_n(P^*) \geq 2n-2 > n-1$, for each $n \geq 2$. 
\end{example}

\section{Spaces of lowest possible (higher) topological complexity}\label{sec: TC = 1}

We begin by recalling the standard cohomological lower bound for $\TC_n(-)$, which will be used in the sequel.  

\begin{definition} Let $\Bbbk$ be a field. The homomorphism induced on cohomology with coefficients in $\Bbbk$ by the $n$-fold diagonal $\Delta_n \colon X \to X^n$ (and thus by $P_n \colon PX \to X^n$, which is a fibrational substitute for it) may be identified with the $n$-fold cup product homomorphism
\[
\cup_n(X) \colon H^*(X;\Bbbk)\otimes_\Bbbk \cdots \otimes_\Bbbk H^*(X;\Bbbk) \to  H^*(X;\Bbbk).
\]
The \emph{ideal of $n$-fold zero divisors}  is $\ker \cup_n(X)$, the kernel of $\cup_n(X)$.
The \emph{$n$-fold zero-divisors cup-length} is $\nil\big(\ker \cup_n(X)\big)$, the nilpotency of this ideal, which is to say the number of factors in the longest non-trivial product of elements from this ideal.
\end{definition}

\begin{proposition}[{\cite[Th.7]{Far03}, \cite[Prop.3.4]{Rud10}, \cite[Th.3.9]{BGRT10}}]%
\label{prop: zero divisors lower bd}%
For any field $\Bbbk$, we have $\nil\big(\ker \cup_n(X)\big) \leq \TC_n(X)$. \qed
\end{proposition}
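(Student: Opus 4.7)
The plan is to reduce the statement to two standard facts: a general cohomological lower bound for sectional category, and a cohomological identification of the map induced by $P_n$ as the $n$-fold cup product.

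First, I would establish the general fibrational inequality $\nil(\ker p^*) \leq \secat(p)$ for any fibration $p \colon E \to B$ and any field $\Bbbk$. Suppose $\secat(p) = k$, so there is an open cover $\{U_0, \ldots, U_k\}$ of $B$ admitting local sections $s_i \colon U_i \to E$. Given classes $u_0, \ldots, u_k \in \ker p^*$, the identity $p \circ s_i = j_i$ forces $j_i^*(u_i) = s_i^*(p^*(u_i)) = 0$, so each $u_i$ lifts to a relative class $\tilde{u}_i \in H^*(B, U_i;\Bbbk)$ via the long exact sequence of the pair. The relative cup product $\tilde{u}_0 \cup \cdots \cup \tilde{u}_k$ lives in $H^*(B, U_0 \cup \cdots \cup U_k;\Bbbk) = H^*(B, B;\Bbbk) = 0$, and its image under the restriction $H^*(B, B;\Bbbk) \to H^*(B;\Bbbk)$ is the absolute product $u_0 \cup \cdots \cup u_k$. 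Hence this product vanishes, giving $\nil(\ker p^*) \leq k$.

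Second, I would identify $\ker P_n^*$ with $\ker \cup_n(X)$. The inclusion $c \colon X \hookrightarrow PX$ of constant paths is a homotopy equivalence satisfying $P_n \circ c = \Delta_n$, so $c^* \circ P_n^* = \Delta_n^*$. Since $c^*$ is an isomorphism, $P_n^*$ has the same kernel as $\Delta_n^*$. For $X$ of finite type and $\Bbbk$ a field, the Künneth map $H^*(X;\Bbbk)^{\otimes n} \to H^*(X^n;\Bbbk)$ is a ring isomorphism sending $a_1 \otimes \cdots \otimes a_n$ to $a_1 \times \cdots \times a_n$; under this identification $\Delta_n^*(a_1 \times \cdots \times a_n) = a_1 \cup \cdots \cup a_n$, so $\Delta_n^*$ is precisely $\cup_n(X)$. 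Combining with the first step gives
\[
\nil(\ker \cup_n(X)) \;=\; \nil(\ker P_n^*) \;\leq\; \secat(P_n) \;=\; \TC_n(X).
\]

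The main technical delicacy lies in the relative cup-product step of the general lower bound: one must verify that any choice of lifts $\tilde{u}_i$ produces a relative product whose restriction to $H^*(B;\Bbbk)$ agrees with the absolute product $u_0 \cup \cdots \cup u_k$, using naturality of the cup product for pairs together with excision-type identifications of $H^*(B, U_0 \cup \cdots \cup U_k;\Bbbk)$. This is routine from the standard axioms of singular cup product on pairs, and holds verbatim for any coefficient field, so it presents no genuine obstacle beyond careful bookkeeping.
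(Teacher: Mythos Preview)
Your argument is correct and is precisely the standard proof found in the references cited. Note, however, that the paper itself does not give a proof of this proposition: it is stated with citations to \cite{Far03}, \cite{Rud10}, and \cite{BGRT10} and closed immediately with a \qed. So there is no proof in the paper to compare against; you have simply supplied the classical argument that those references contain, namely the general inequality $\nil(\ker p^*)\leq\secat(p)$ via relative cup products, combined with the identification (already recorded in the paper's Definition preceding the proposition) of $\Delta_n^*$ with the $n$-fold cup product through the K\"unneth isomorphism.
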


For an element $a \in \widetilde{H}^*(X;\Bbbk)$, we write $\bar{a} = a\otimes 1 - 1 \otimes a \in H^*(X;\Bbbk) \otimes H^*(X;\Bbbk)$.  Clearly, $\bar{a}$ is a non-zero element in the ideal of $2$-fold zero divisors.
We adopt notation from the proof of \cite[Th.3.14]{BGRT10} to describe certain $n$-fold zero divisors.  For $i = 1, \ldots, n$, let $p_i \colon X^n \to X$ denote projection on the $n$th factor.  Then we write  $a_i = (p_i)^*(a) \in  H^*(X^n; \Bbbk)$, which we regard as an element of $H^*(X; \Bbbk)^{\otimes n}$ under the identification $H^*(X^n; \Bbbk) \cong H^*(X; \Bbbk)^{\otimes n}$, namely, the K{\"u}nneth theorem.   Then we have the $n-1$ elements $\{a_1 - a_2, a_1 - a_3, \ldots, a_1 - a_n \}$, each of which is an $n$-fold zero divisor in $H^*(X)^{\otimes n}$.

\begin{lemma}\label{lem: higher zero divisors}
Suppose we have $a, b \in H^*(X)$.  With the above notation, for $n \geq 2$, we have 
$$(a_1 - a_2)(a_1 - a_3) \cdots (a_1 - a_n) \equiv (-1)^n \big( a\otimes 1 - 1 \otimes a \big) \otimes a \otimes \cdots \otimes a$$
modulo terms in the ideal of $H^*(X) ^{\otimes n}$ generated by the elements $a^2 \otimes 1 \otimes \cdots \otimes 1$ and $a \otimes \cdots \otimes a \otimes 1$.   Consequently,  we have
$$(b_1 - b_2)(a_1 - a_2)(a_1 - a_3) \cdots (a_1 - a_n) \equiv (-1)^{n+1} \big(   b\otimes a  + (-1)^{|a| |b|} a \otimes b \big) \otimes a \otimes \cdots \otimes a,$$
modulo terms in the ideal of $H^*(X) ^{\otimes n}$ generated by the elements $a^2 \otimes 1 \otimes \cdots \otimes 1$, $ba \otimes 1 \otimes \cdots \otimes 1$, and $1 \otimes ba \otimes 1 \otimes \cdots \otimes  1$.
\end{lemma}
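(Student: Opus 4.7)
The plan is to establish both congruences by direct expansion of the stated products in $H^*(X;\Bbbk)^{\otimes n}$, exploiting the fact that the classes $a_1,\ldots,a_n$ (and $b_1,b_2$) graded commute with each other, since they are supported on disjoint tensor factors.

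For the first congruence I would expand
\[
\prod_{i=2}^n (a_1-a_i) = \sum_{S \subseteq \{2,\ldots,n\}} (-1)^{|S|}\, a_1^{\,n-1-|S|} \prod_{i \in S} a_i,
\]
ordering the factors consistently to control the Koszul signs. Since $a_1^2 = a^2\otimes 1\otimes\cdots\otimes 1$ is the first named ideal generator, every contribution with $|S| \le n-3$ is absorbed into the ideal, leaving only two kinds of surviving monomials. The single $|S|=n-1$ term contributes $(-1)^{n-1}\cdot 1\otimes a^{\otimes(n-1)}$, and the $|S|=n-2$ terms are indexed by the omitted position $j \in \{2,\ldots,n\}$, contributing $(-1)^{n-2}$ times the monomial with $a$ in every slot except slot $j$. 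Among these, $j=2$ produces $(-1)^n\, a\otimes 1\otimes a^{\otimes(n-2)}$, the case $j=n$ gives a multiple of the named generator $a^{\otimes(n-1)}\otimes 1$, and the intermediate cases $j\in\{3,\ldots,n-1\}$ lie in the ideal generated by the single-coordinate permutations of $a^{\otimes(n-1)}\otimes 1$ (the natural reading of the second family of generators). Collecting the two surviving terms yields
\[
(-1)^{n}\bigl[a\otimes 1 \otimes a^{\otimes(n-2)} - 1\otimes a^{\otimes(n-1)}\bigr] = (-1)^n\bigl(a\otimes 1-1\otimes a\bigr)\otimes a\otimes\cdots\otimes a,
\]
as claimed.

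For the second congruence I would multiply the first congruence on the left by $(b_1-b_2)$ and then simplify the factor $(b\otimes 1 - 1\otimes b)(a\otimes 1 - 1\otimes a)$ that appears in the first two tensor slots. Expanding and commuting $b$ past $a$ with the appropriate Koszul sign,
\[
(b\otimes 1 - 1\otimes b)(a\otimes 1 - 1\otimes a) = ba\otimes 1 - b\otimes a - (-1)^{|a||b|}\,a\otimes b + 1\otimes ba.
\]
The outer two summands are absorbed into the named ideal generators $ba\otimes 1\otimes\cdots$ and $1\otimes ba\otimes 1\otimes\cdots$, leaving $-\bigl(b\otimes a + (-1)^{|a||b|}\,a\otimes b\bigr)$. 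Tensoring on the right with $a^{\otimes(n-2)}$ and carrying the overall sign $(-1)^n$ yields the asserted expression with sign $(-1)^{n+1}$. I would also note that multiplication by $(b_1-b_2)$ preserves the ideal containing the error terms of the first congruence, so those error terms remain absorbed.

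The main obstacle is bookkeeping: correctly tracking the Koszul signs from graded commutativity when $|a|$ or $|b|$ is odd, and verifying that every off-center monomial $a_1 a_2 \cdots \widehat{a_j}\cdots a_n$ with $3\le j\le n-1$ actually lies in the stated ideal. The cleanest reading requires interpreting the second family of generators as including every single-coordinate permutation of $a^{\otimes(n-1)}\otimes 1$, rather than only the one monomial literally displayed.
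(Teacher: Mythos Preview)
Your direct-expansion approach differs from the paper's, which argues by induction on the number of factors, carrying an ideal $I_k$ at each stage and multiplying through by $(a_1-a_{k+1})$. Your expansion is more transparent about exactly which monomials occur.

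You have correctly put your finger on a genuine issue: for $n\ge 4$ the off-center monomials $a_1 a_2\cdots\widehat{a_j}\cdots a_n$ with $3\le j\le n-1$ are \emph{not} in the ideal generated by $a^2\otimes 1^{\otimes(n-1)}$ and the single element $a^{\otimes(n-1)}\otimes 1$ literally named in the statement. (The paper's inductive proof has the same slip: multiplying the second generator of $I_k$ by $a_{k+1}$ gives $a^{\otimes(k-1)}\otimes 1\otimes a\otimes 1^{\otimes(n-k-1)}$, which does not lie in $I_{k+1}$.) However, your proposed repair---reading the second family as all single-coordinate permutations of $a^{\otimes(n-1)}\otimes 1$---is too generous: the permutation with the $1$ in slot~$2$ is exactly your surviving term $a\otimes 1\otimes a^{\otimes(n-2)}$, so under that reading the first congruence collapses.

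The clean fix is to observe that every off-center monomial (indeed every error term in your expansion once the $a_1^2$-terms are removed) is divisible by $a_1 a_2$; the first congruence then holds modulo the ideal generated by $a^2\otimes 1^{\otimes(n-1)}$ and $a\otimes a\otimes 1^{\otimes(n-2)}$. This also repairs your final step. Saying that multiplication by $(b_1-b_2)$ ``preserves the ideal containing the error terms'' is not what is needed, since the second assertion's ideal is a different one; what you actually need is that $(b_1-b_2)\cdot a_1 a_2\cdot(\text{anything})$ lands in the ideal generated by $ba\otimes 1^{\otimes(n-1)}$ and $1\otimes ba\otimes 1^{\otimes(n-2)}$, and this is immediate because $b_1 a_1$ puts $ba$ in slot~$1$ while $b_2 a_2$ puts $ba$ in slot~$2$. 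With that observation your argument for the second congruence---the one actually used downstream---is complete.
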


\begin{proof}
We proceed by induction, with the induction hypothesis that, for $2 \leq k \leq n$, we have 
$$(a_1 - a_2)(a_1 - a_3) \cdots (a_1 - a_k) \equiv (-1)^k \big( a\otimes 1 - 1 \otimes a \big) \otimes a \otimes \cdots \otimes a \otimes 1 \otimes \cdots \otimes 1$$
modulo terms in the ideal $I_k$ of $H^*(X) ^{\otimes n}$ generated by the elements $a^2 \otimes 1 \otimes \cdots \otimes 1$ and $a \otimes \cdots \otimes a \otimes 1 \cdots \otimes 1$, where we have $(k-1)$ occurrences of $a$ in each term of the displayed element and in the latter ideal generator.   Induction starts with $k = 2$, where there is nothing to prove.  For the induction step, we use the induction hypothesis to write $\big((a_1 - a_2)(a_1 - a_3) \cdots (a_1 - a_{k})\big)(a_1 - a_{k+1})$ as
\begin{align*}
 &\equiv  (-1)^k  \big((a\otimes 1 - 1 \otimes a ) \otimes a \otimes \cdots \otimes a \otimes 1 \otimes \cdots \otimes 1\big) \big(a\otimes1\otimes \cdots \otimes 1\big)\\
 & - (-1)^k  \big((a\otimes 1 - 1 \otimes a ) \otimes a \otimes \cdots \otimes a \otimes 1 \otimes \cdots \otimes 1\big) \big(1 \otimes \cdots \otimes 1 \otimes a \otimes 1 \otimes \cdots \otimes 1\big).
 \end{align*}
The first part of this expression contributes
$$\pm (a^2\otimes 1 \pm a \otimes a ) \otimes a \otimes \cdots \otimes a \otimes 1 \otimes \cdots \otimes 1,$$
which is  in the ideal $I_{k+1}$.  The second part contributes 
$$(-1)^{k+1} \big( a\otimes 1 - 1 \otimes a \big) \otimes a \otimes \cdots \otimes a \otimes 1 \otimes \cdots \otimes 1,$$
with $k$ occurrences of $a$ in each term.  This completes the induction step, and the first assertion follows.

For the second assertion, observe that we may write 
$$(b_1 - b_2)(a_1 - a_2)(a_1 - a_3) \cdots (a_1 - a_n),$$
from the first part,  as congruent to
$$(-1)^n \big( (b\otimes 1 - 1 \otimes b)( a\otimes 1 - 1 \otimes a) \big) \otimes a \otimes \cdots \otimes a$$
modulo terms in the ideal $\big((b\otimes1 - 1 \otimes b)\otimes 1 \otimes \cdots \otimes 1 \big) I_n$.
But now it is clear that the only contribution outside the ideal generated by $a^2 \otimes 1 \otimes \cdots \otimes 1$, $ba \otimes 1 \otimes \cdots \otimes 1$, and $1 \otimes ba \otimes 1 \otimes \cdots \otimes  1$ is that asserted.
\end{proof}

\begin{theorem}\label{thm: TC_n = n-1}
Let $X$ be a path-connected CW complex of finite type.    If $\TC_n(X) = n-1$, for some $n \geq 2$, then $\pi_1(X)$ is torsion-free and either $X$ is acyclic or $X$ is an odd-dimensional integral  homology sphere.  
Furthermore, we have:
\begin{itemize}
\item[(A)] if $X$ is simply connected, then for some $r \geq 1$ we have $X \simeq S^{2r+1}$;
\item[(B)] if $\pi_1(X) \not= \{e\}$ and if $X$ is a nilpotent space, then we have $X \simeq S^1$; and 
\item[(C)] if $\pi_1(X) \not= \{e\}$, and if $X$ is a co-H-space, then we have $X \simeq S^1$.
\end{itemize}  
\end{theorem}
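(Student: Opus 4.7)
The plan is to combine the cohomological zero-divisor lower bound (\propref{prop: zero divisors lower bd}) with the computation of \lemref{lem: higher zero divisors} and the structural results \propref{prop: pi_1 finite} and \propref{prop: homology sphere}. The first step is to rule out torsion in $\pi_1(X)$: a nontrivial torsion element would, by \propref{prop: pi_1 finite}, give $\TC_n(X)\geq \cat(X^{n-1})\geq 2(n-1)\geq n$, contradicting $\TC_n(X)=n-1$. So $\pi_1(X)$ is torsion-free.

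The heart of the proof is to show $\dim_{\Bbbk}\widetilde{H}^*(X;\Bbbk)\leq 1$ for every field $\Bbbk$, at which point \propref{prop: homology sphere} forces $X$ to be acyclic or an integral homology sphere. Suppose for contradiction that some field $\Bbbk$ has $\dim_{\Bbbk}\widetilde{H}^*(X;\Bbbk)\geq 2$. Let $r\geq 1$ be the smallest positive degree with $H^r(X;\Bbbk)\neq 0$, fix a nonzero $a\in H^r$, and pick a nonzero $b$ that is either linearly independent from $a$ in $H^r$ or, if $\dim H^r=1$, lies in the smallest degree $M>r$ with $H^M(X;\Bbbk)\neq 0$. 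By the second assertion of \lemref{lem: higher zero divisors},
\[
(b_1-b_2)(a_1-a_2)\cdots(a_1-a_n)\;\equiv\;(-1)^{n+1}\bigl(b\otimes a+(-1)^{|a||b|}a\otimes b\bigr)\otimes a^{\otimes(n-1)}
\]
modulo the ideal $J\subset H^*(X;\Bbbk)^{\otimes n}$ generated by $a^2\otimes 1^{\otimes(n-1)}$, $ba\otimes 1^{\otimes(n-1)}$ and $1\otimes ba\otimes 1^{\otimes(n-2)}$. The summand $\pm\,a\otimes b\otimes a^{\otimes(n-1)}$ sits in multidegree $(r,|b|,r,\ldots,r)$, whereas each generator of $J$ has first tensor factor of degree $\geq 2r$ or second tensor factor of degree $\geq r+|b|$; since $r<2r$ and $|b|<r+|b|$, the component of $J$ in multidegree $(r,|b|,r,\ldots,r)$ is zero. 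This summand is itself nonzero (automatically when $|b|=M>r$, and by linear independence of $a,b$ in the single-degree case, including in characteristic $2$), so the right-hand side is nonzero modulo $J$. This gives a nonzero $n$-fold product of zero-divisors, contradicting $\TC_n(X)=n-1$ via \propref{prop: zero divisors lower bd}. To eliminate an even-dimensional integral homology sphere, I would apply \lemref{lem: higher zero divisors} with $b=a$ in rational coefficients on a generator $a\in H^r(X;\Q)$ with $r$ even: since $a^2\in H^{2r}(X;\Q)=0$ the ideal $J$ vanishes, and the product reduces to $(-1)^{n+1}\cdot 2\,a^{\otimes n}$, which is nonzero in $H^{nr}(X^n;\Q)$, again giving $\nil\geq n$.

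To conclude (A), (B) and (C), the remaining task is upgrading this homological rigidity to a homotopy equivalence. In (A), acyclic plus simply connected forces $X$ contractible and $\TC_n=0$, impossible; so $X$ is a simply connected integral homology $S^{2r+1}$ with $r\geq 1$, and Hurewicz produces a map $S^{2r+1}\to X$ inducing an isomorphism on integer homology between simply connected CW complexes, hence a homotopy equivalence by Whitehead. For (B) and (C), nontriviality of $\pi_1(X)$ rules out acyclicity and any higher homology sphere $S^{2r+1}$ ($r\geq 1$), both of which force $H_1(X)=\pi_1(X)^{ab}=0$, while a torsion-free finitely generated nilpotent group (in (B))---or a free group (in (C), using that co-H-spaces have free fundamental group)---with trivial abelianization is trivial. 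So $X$ is an integral homology $S^1$ with $\pi_1^{ab}\cong\Z$, and the analogous facts---a finitely generated nilpotent group with cyclic abelianization is cyclic; the free group of rank one is $\Z$---give $\pi_1(X)\cong\Z$. Picking $g\colon S^1\to X$ to represent a generator of $\pi_1(X)$, the induced maps on $\pi_1$ and on integer homology are isomorphisms, so the cofiber $C(g)$ is simply connected with trivial reduced homology, hence contractible, forcing $g$ to be a homotopy equivalence $S^1\simeq X$.

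The main obstacle is the multidegree bookkeeping in the core step: because $X$ need only be of finite type and may be infinite-dimensional, $a$ cannot be chosen as a top-degree class and so the ideal $J$ in \lemref{lem: higher zero divisors} is not \emph{a priori} zero. One must verify in each configuration of $|a|$ and $|b|$ (equal or not), and across all characteristics including $2$, that the displayed expression genuinely survives modulo $J$ and is nonzero in $H^*(X;\Bbbk)^{\otimes n}$.
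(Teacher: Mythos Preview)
Your argument through the end of part (A) is correct and essentially follows the paper's approach; your multidegree analysis showing that the displayed $n$-fold zero-divisor product is nonzero modulo the ideal $J$ is in fact more careful than the paper, which simply asserts nonvanishing ``as $a$ and $b$ are linearly independent.'' (There is a harmless off-by-one: the trailing factor should be $a^{\otimes(n-2)}$, not $a^{\otimes(n-1)}$, so that the total has $n$ tensor factors.)

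The genuine gap is in your final step for (B) and (C). The implication ``$C(g)$ contractible $\Rightarrow$ $g$ is a homotopy equivalence'' is \emph{false} in general, even when $g$ also induces an isomorphism on $\pi_1$. A concrete counterexample: let $X=S^1\vee S^2\cup_{\varphi} e^3$, where $\varphi\in\pi_2(S^1\vee S^2)\cong\Z[t,t^{-1}]$ represents $2-t$. Then $\pi_1(X)\cong\Z$, the cellular chain complex shows $H_*(X;\Z)\cong H_*(S^1;\Z)$, and collapsing the circle sends $\varphi$ to a degree-$1$ map, so $C(g)\simeq S^2\cup_{\deg 1}e^3\simeq *$. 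Yet $\pi_2(X)\cong\Z[t,t^{-1}]/(2-t)\cong\Z[\tfrac12]\neq 0$, so $g\colon S^1\to X$ is not a homotopy equivalence. (This $X$ is neither nilpotent nor a co-$H$-space, so it does not contradict the theorem --- but it shows your argument, which does not use those hypotheses at this point, cannot close.)

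The paper uses the extra hypotheses precisely here. In (B) it invokes Dror's generalized Whitehead theorem: a map between nilpotent CW complexes inducing an isomorphism on integral homology is a homotopy equivalence, which applies directly to $j\colon S^1\to X$. In (C) it appeals to a result of Iwase--Saito--Sumi on the homology of universal covers of co-$H$-spaces to conclude that $\widetilde X$ is acyclic (hence contractible), making $X$ a $K(\Z,1)\simeq S^1$. To repair your proof you must bring the nilpotent (respectively co-$H$) hypothesis to bear at this last step, e.g.\ by citing Dror in (B) and either the Iwase--Saito--Sumi result or an equivalent argument controlling $H_*(\widetilde X)$ in (C); the cofiber alone does not suffice.
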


\begin{proof}
By combining \propref{prop: pi_1 finite} with the first inequality of (\ref{eq:higher basic ineq}), we conclude that $\pi_1(X)$ must be torsion-free.

Next, we show that $X$ satisfies the hypotheses of \propref{prop: homology sphere}.  For this, we argue by contradiction. Suppose that,  for some field $\Bbbk$, we have $a\in H^r(X;\Bbbk)$ and $b\in H^s(X;\Bbbk)$ with $r,s>0$,  and $\{a,b\}$ linearly independent over $\Bbbk$. By \lemref{lem: higher zero divisors}, the $n$-fold product of $n$-fold zero-divisors
$$(b_1 - b_2)(a_1 - a_2)(a_1 - a_3) \cdots (a_1 - a_n)$$
is congruent to 
$$(-1)^{n+1} \big(   b\otimes a  + (-1)^{|a| |b|} a \otimes b \big) \otimes a \otimes \cdots \otimes a,$$
modulo terms in the ideal of $H^*(X) ^{\otimes n}$ generated by the elements $a^2 \otimes 1 \otimes \cdots \otimes 1$, $ba \otimes 1 \otimes \cdots \otimes 1$, and $1 \otimes ba \otimes 1 \otimes \cdots \otimes  1$.  It follows that this term
is nonzero, as  $a$ and $b$ are linearly independent, and so we have $\TC_n(X) \geq n$ by \propref{prop: zero divisors lower bd}, which is a contradiction.  

Therefore, $X$ must satisfy the hypothesis of \propref{prop: homology sphere}, and either $X$ is acyclic, or $X$ is an integral homology sphere.  If $X$ is an even dimensional integral homology sphere, however, then once  again by \lemref{lem: higher zero divisors}, the $n$-fold product of $n$-fold zero-divisors
$$(a_1 - a_2)(a_1 - a_2)(a_1 - a_3) \cdots (a_1 - a_n) $$
is congruent to 
$$(-1)^{n+1} \big(  a\otimes a  + (-1)^{|a| |a|} a \otimes a \big) \otimes a \otimes \cdots \otimes a
= (-1)^{n+1}\,2\, a \otimes  \cdots \otimes a,$$
modulo terms in the ideal of $H^*(X) ^{\otimes n}$ generated by the elements $a^2 \otimes 1 \otimes \cdots \otimes 1$ and $1 \otimes a^2 \otimes 1 \otimes \cdots \otimes  1$.  We may take rational coefficients here, for example, and then we have $\TC_n(X) \geq n$ by \propref{prop: zero divisors lower bd}, which is again a contradiction.  
The only possibilities that remain, then, are that $X$ is acyclic or $X$ is an odd-dimensional integral homology sphere.

We treat the remaining cases separately.

(A)  Assume that  $X$ is simply connected.  Then $X$ cannot be acyclic. Indeed, Whitehead's Theorem would then imply that $X$ were contractible, and hence we would have $\TC_n(X)=0$.  Therefore, $X$ is an odd-dimensional integral homology sphere.  But any simply connected integral homology sphere is of the homotopy type of the sphere (of the same dimension), by the theorems of Hurewicz and Whitehead.     

(B)  Suppose that $X$ is a nilpotent space with $\pi_1(X) \not= \{e\}$.  Since $\pi_1(X)$ is nilpotent, we cannot have $H_1(X) = 0$.  Therefore, the only possibility is that  $X$ is an integral homology circle.    So   let $j\colon S^1 \to X$ be a generator of $\pi_1(X)$ that, under  the Hurewicz homomorphism $h \colon \pi_1(X) \to H_1(X)$, is mapped to the generator $1 \in H_1(X) \cong \Z$.  Then we have $j_* \colon H_1(S^1) \to H_1(X)$ is an isomorphism, since both groups are isomorphic to $\Z$.  However, $H_i(S^1)$ and $H_i(X)$ are both zero for $i \geq 2$, and thus $j\colon S^1 \to X$ is an integral homology equivalence.  As both $S^1$ and $X$ are nilpotent spaces, it follows from  \cite{Dror} (see  also \cite{Ger75}) that $j \colon S^1 \to X$ is a homotopy equivalence.

(C) Finally, suppose  that $X$ is a co-H-space with $\pi_1(X) \not= \{e\}$.  
We claim that $\pi_1(X)$ must be isomorphic to $\Z$.
For, as a co-H-space, $X$ must have free fundamental group. Since $X$ is of finite type, $\pi_1(X)$ must be a finitely-generated free group, and hence isomorphic to a free product of $k$ copies of $\Z$, for some positive integer $k$.
If $k\geq 2$, then the rational cohomology group $H^1(X;\Q)\cong \operatorname{Hom}(H_1(X);\Q)\cong \operatorname{Hom}(\Z^n,\Q)\cong\Q^n$ has dimension at least $2$, which contradicts the first part of the present theorem.  Thus we have $\pi_1(X) \cong \Z$.
Now, since $X$ is a co-H-space, it follows from \cite[Theorem 2.1]{ISS} that the universal cover $\tilde{X}$ of $X$ has trivial reduced integral homology.    Note that the hypotheses of  \cite[Theorem 2.1]{ISS} are automatically satisfied if $X$ is a co-H space.  Also, that result is stated for $X$ finite, but it is deduced as an immediate consequence of Theorem 4.4 of the same paper, which only requires $X$ of finite type.  In the present situation, we have $H_*(X) = 0$ for $*>1$, as $X$ is a homology circle, which gives $H_*(\widetilde{X}) = 0$ for $*>1$, by the expression relating these two in \cite[Theorem 2.1]{ISS}.  Hence, $X$ has acyclic universal cover, which is thus contractible by Whitehead's Theorem. Hence $X$ is a $K(\Z,1)$-space, that is,  homotopy equivalent to the  circle.  
\end{proof}

\begin{corollary}\label{cor: main} 
Let $X$ be a path-connected CW complex of finite type. If \hbox{$\TC(X)=1$}, then $X$ is homotopy equivalent to some sphere of odd dimension.
\end{corollary}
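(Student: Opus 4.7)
The plan is to deduce the corollary directly from \thmref{thm: TC_n = n-1} in the case $n=2$. The key observation that makes this effortless is that the hypothesis $\TC(X)=1$ automatically supplies the co-H-space structure needed to cover the non-simply-connected case of that theorem, so we do not have to separately assume either simple connectivity or nilpotence.

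First, I would note that $\TC(X)=1$ forces $X$ to be non-contractible, since contractible spaces have $\TC = 0$. Applying the basic inequality (\ref{eq:basic ineq}), namely $\cat(X)\leq \TC(X) = 1$, together with non-contractibility of $X$ (which rules out $\cat(X) = 0$), yields $\cat(X) = 1$. This is precisely equivalent to $X$ being a (non-contractible) co-H-space, as recalled in the introduction.

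Next I would invoke \thmref{thm: TC_n = n-1} with $n=2$, and split on $\pi_1(X)$. If $\pi_1(X)$ is trivial, then part (A) of the theorem gives $X\simeq S^{2r+1}$ for some $r\geq 1$, as desired. If $\pi_1(X)$ is non-trivial, then since we have already shown $X$ is a co-H-space, part (C) of the theorem applies and gives $X\simeq S^1$. In either case $X$ is homotopy equivalent to an odd-dimensional sphere, completing the argument.

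There is no real obstacle here, since all the substantive work has been carried out in \thmref{thm: TC_n = n-1}; the only point worth highlighting is that the co-H-space hypothesis of part (C) comes for free from $\TC(X)=1$, which is what allows a clean dichotomy between parts (A) and (C) without needing part (B) (the nilpotent case) at all.
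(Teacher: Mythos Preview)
Your proposal is correct and follows essentially the same approach as the paper: deduce from $\cat(X)\leq\TC(X)=1$ that $X$ is a co-H-space, and then apply parts (A) and (C) of \thmref{thm: TC_n = n-1} according to whether $\pi_1(X)$ is trivial or not. The paper's proof is just a terser version of what you wrote.
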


\begin{proof}
The inequalities $\cat(X)\leq\TC(X)=1$ imply that $X$ is a co-H-space.  The result follows from parts (A) and (C) of \thmref{thm: TC_n = n-1}.
\end{proof}

\begin{corollary}\label{cor: higher TC Co-H}
Let $X$ be a path-connected, non-contractible CW complex of finite type.  If $X$ is a co-H-space, then either (a) $X$ is of the homotopy type of some odd-dimensional sphere, and we have $\TC_n(X) = n-1$ for all $n \geq 2$; or (b) we have $\TC_n(X) =  n$ for all $n \geq 2$.  \qed
\end{corollary}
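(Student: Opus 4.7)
The plan is to sandwich $\TC_n(X)$ tightly using \eqref{eq:higher basic ineq}, reducing the problem to just two possible values $n-1$ and $n$, and then to use \thmref{thm: TC_n = n-1} as a rigidity statement showing that the lower value is achievable only when $X$ is an odd sphere.

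My first step would be to compute $\cat(X^n) = n$ for every non-contractible co-H-space $X$ of finite type. The upper bound $\cat(X^n) \leq n$ is subadditivity of L-S category applied $n$ times to $\cat(X) = 1$ (co-H-spaces being exactly the spaces of category at most one). For the matching lower bound, non-contractibility provides a non-zero class $a \in \widetilde{H}^*(X;\Bbbk)$ for some field $\Bbbk$, and the K\"unneth theorem makes the $n$-fold external product $a \times \cdots \times a \in H^*(X^n;\Bbbk)$ non-zero, so the cup-length of $X^n$ is at least $n$. Feeding $\cat(X^{n-1}) = n-1$ and $\cat(X^n) = n$ into \eqref{eq:higher basic ineq} yields
\[
n-1 \;\leq\; \TC_n(X) \;\leq\; n
\]
for every $n \geq 2$, so $\TC_n(X)$ can take only the values $n-1$ or $n$.

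The dichotomy then comes from splitting on whether the lower value is ever attained. Should $\TC_{n_0}(X) = n_0 - 1$ hold for some single $n_0 \geq 2$, \thmref{thm: TC_n = n-1} applies directly: its part (A) covers $\pi_1(X) = \{e\}$, while part (C), whose hypotheses include precisely the co-H-space assumption, covers $\pi_1(X) \neq \{e\}$. Either way $X \simeq S^{2r+1}$ for some $r \geq 0$, and Rudyak's formula $\TC_n(S^{2r+1}) = n-1$, cited in the introduction, then upgrades this to $\TC_n(X) = n-1$ for \emph{every} $n \geq 2$, giving alternative (a). If no such $n_0$ exists, the sandwich forces $\TC_n(X) = n$ for every $n \geq 2$, giving alternative (b).

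The only conceptual point worth flagging is the uniformity in $n$: one must rule out the mixed scenario in which $\TC_n(X) = n-1$ for some values of $n$ and $\TC_m(X) = m$ for others, which would spoil the clean statement. The mechanism preventing this is the combination of \thmref{thm: TC_n = n-1} and Rudyak's converse: the former fixes the homotopy type of $X$ as an odd sphere from any single instance of $\TC_n(X) = n-1$, and the latter then propagates the equality to all values of $n$. Beyond this verification, the argument is essentially a bookkeeping assembly of \thmref{thm: TC_n = n-1}, the subadditivity of $\cat$, and the K\"unneth theorem; no real obstacle is anticipated.
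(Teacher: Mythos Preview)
Your argument is essentially what the paper intends (it simply writes \qed, leaving the reader to assemble \eqref{eq:higher basic ineq}, subadditivity of $\cat$, \thmref{thm: TC_n = n-1}(A)(C), and Rudyak's computation $\TC_n(S^{2r+1})=n-1$). One small slip to repair: the sentence ``non-contractibility provides a non-zero class $a \in \widetilde{H}^*(X;\Bbbk)$ for some field $\Bbbk$'' is false in general---the acyclic space $P^*$ in the paper's own example is non-contractible of finite type yet has $\widetilde{H}^*(P^*;\Bbbk)=0$ for every $\Bbbk$. The statement \emph{is} true once the co-H hypothesis is used (a non-contractible co-H-space cannot be acyclic: its $\pi_1$ is free, so $H_1=0$ forces $\pi_1=0$, and then Whitehead gives contractibility), but you should say so. Simpler still, bypass the cohomological detour entirely and quote the inequality $\cat(X^m)\geq m$ for non-contractible $X$ that the paper already invokes from \cite[Th.1.47]{CLOT03}; this gives the lower bound $\cat(X^{n-1})\geq n-1$ directly.
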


\begin{remark}
The combination of \propref{prop: pi_1 finite} and the first inequality of (\ref{eq:higher basic ineq}) actually implies the following:  If $n-1 \leq \TC_n(X) \leq 2n -3$, then $\pi_1(X)$ is torsion-free. This fact suggests that merely requiring a small value of $\TC_n(X)$---as opposed to requiring that it equal the lower bound from the first inequality of (\ref{eq:higher basic ineq})---already entails strong restrictions on the topology of a space.
\end{remark}


\providecommand{\bysame}{\leavevmode\hbox to3em{\hrulefill}\thinspace}
\providecommand{\MR}{\relax\ifhmode\unskip\space\fi MR }
\providecommand{\MRhref}[2]{%
  \href{http://www.ams.org/mathscinet-getitem?mr=#1}{#2}
}
\providecommand{\href}[2]{#2}

\end{document}